\newtheorem{thm}{Theorem}[section]
\newtheorem{cor}[thm]{Corollary}
\newtheorem{example}[thm]{Example}
\newtheorem{lemma}[thm]{Lemma}
\newtheorem{prop}[thm]{Proposition}
\newtheorem{rem}[thm]{Remark}
\newtheorem{defn}[thm]{Definition}
\newtheorem{thevarthm}[thm]{\varthmname}
\newenvironment{varthm*}[1]{\trivlist\item[]{\bf #1.}\it}{\endtrivlist}
\def\Hilb{\operatorname{Hilb}}
\def\Sing{\operatorname{Sing}}
\def\mult{\operatorname{mult}}
\def\max{\operatorname{max}}
\def\length{\operatorname{length}}
\def\c1{\operatorname{c_1}}
\def\c2{\operatorname{c_2}}
\def\gon{\operatorname{gon}}
\def\ZZ{{\mathbb Z}}
\def\PP{{\mathbb P}}
\def\C{{\mathcal C}}
\def\N{{\mathcal N}}
\def\O{{\mathcal O}}
\def\T{{\mathcal T}}
\def\*{\otimes}
\def\sub{\subseteq}
\def\+{\oplus}                   
\def\*{\otimes}                  
\def\khpil{\rightarrow}
\def\Supp{\operatorname{Supp}}
\def\mov{\operatorname{mov}}
\newcommand\calo{{\mathcal O}}
\newcommand\eps{\varepsilon}
\newcommand\lra{\longrightarrow}
\newcommand\beginproof[1]{\trivlist\item[\hskip\labelsep{\em #1.}]}
\newcommand\proofof[1]{\beginproof{Proof of #1}}
\def\endproof{\hspace*{\fill}\endproofsymbol\endtrivlist}
\def\endproofsymbol{\frame{\rule[0pt]{0pt}{6pt}\rule[0pt]{6pt}{0pt}}}
\begin{document}

\title{Moving curves and Seshadri constants}
\author{Andreas Leopold Knutsen, Wioletta Syzdek, Tomasz Szemberg}

\address{\hskip -.43cm Andreas Leopold Knutsen,
Department of Mathematics, University of Bergen,
Johannes Brunsgate 12, 5008 Bergen, Norway, e-mail {\tt andreas.knutsen@math.uib.no}}

\address{\hskip -.43cm Wioletta Syzdek,
   Mathematisches Institut, Universit\"at
   Duisburg-Essen, 45117 Essen, Germany,
e-mail {\tt wioletta.syzdek@uni-due.de}}

\address{\hskip -.43cm Tomasz Szemberg,
   Instytut Matematyczny AP,
   Podchora\.zych 2, PL-30-084 Krak\'ow, Poland,
   e-mail {\tt szemberg@ap.krakow.pl}}

\begin{abstract}
   We study families of curves covering a projective surface and
   give lower bounds on the self-intersection of the members of such
   families, improving results of Ein-Lazarsfeld and Xu. We apply the
   obtained inequalities to get new insights on Seshadri
   constants and geometry of surfaces.
\end{abstract}

\maketitle

\section{Introduction}
   Let $S$ be a smooth projective surface and
   $\C_U=\left\{C_u,x_u\right\}$ be a nontrivial family of irreducible
   pointed curves in $S$ such that $\mult_{x_u}C_u\geq m$ for some
   integer $m\geq 1$.

   Ein and Lazarsfeld showed in \cite{el}, that the self-intersection
   of each member of the family is bounded from below
   $$C_u^2\geq m(m-1).$$
   For $m=1$ this recovers a well known fact, that a curve with
   negative self-intersection cannot move in a family.

   For $m\geq 2$ Xu \cite[Lemma 1]{xu} gives a better bound of
   $$C_u^2\geq m(m-1)+1.$$

   Recall that the \emph{gonality} $\gon(X)$ of a smooth curve $X$ is
   defined as the minimal degree of a covering $X \lra \PP^1$. Our new bound is the following:

\begin{varthm*}{Theorem A}\label{thm:gonality}
   Let $S$ be a smooth projective surface.
   Suppose that $\C_U=\left\{C_u,x_u\right\}$ is a family
   of pointed curves as above
   parametrized by a $2$-dimensional subset $U\subset\Hilb(S)$
   and $C$ is a general member of this family. Let $\widetilde{C}$ be its normalization.
Then
   $$C^2\geq m(m-1)+\gon(\widetilde{C}).$$
\end{varthm*}
   The assumption that $U$ is $2$-dimensional is of course essential
   as there are surfaces fibred by curves of arbitrarily high
   gonality, which have self-intersection $0$. Note, that
   implicitly this assumption is made also in \cite[Lemma 1]{xu}, as
   a reduced curve is singular only in a finite number of points.

   An important point is that $\gon(\widetilde{C})$ can be bounded below in terms of the geometry of the surface. We will do this in Lemma \ref{lemma:help} below. Note for instance that clearly $\gon(\widetilde{C}) \geq 2$ if $S$ is nonrational, so that this already improves Xu's bound.

   Bounds as in Theorem A lead to interesting geometrical constrains
   on Seshadri constants on surfaces. We show in this direction the
   following result, which generalizes \cite[Theorem 3.2]{SyzSze07}.
   The precise definition of the integer $\mu_S$ appearing in the
   statement is given in \eqref{eq:defmu} in section \ref{sec:application}. For instance, $\mu_S \geq 2$ if $S$ is nonrational and $\mu_S \geq 3$ if $|K_S|$ is birational.
\begin{varthm*}{Theorem B} \label{main}
   Let $S$ be a smooth projective surface and $L$ a big and nef line
   bundle on $S$ such that for all $x\in S$
   \begin{equation} \label{inequality}
      \varepsilon(L;x) < \sqrt{(1-\frac{1}{4\mu_S})} \cdot  \sqrt{L^2}.
   \end{equation}
   Then $S$ is fibered by Seshadri curves.
\end{varthm*}

\section{Deformation theory and self-intersection of moving curves}

In this section we will prove Theorem A.

Let $C \subset S$ be a reduced and irreducible curve on a smooth surface. Let $p_a$ and $p_g$
denote the arithmetic and geometric genus, respectively, of $C$.

We have an exact sequence of coherent sheaves on $S$ (cf. \cite[(1.4)]{ser})
\[
\xymatrix{
 0 \ar[r] & \T_C \ar[r] & {\T_S}_{|C} \ar[r] & \N'_{C/S}  \ar[r] & 0,
}
\]
defining the {\it equisingular normal sheaf $\N'_{C/S}$ of $C$ in
$S$}. Its sections parametrize first-order deformations of $C$ in
$S$ that are locally trivial \cite[\S~4.7.1]{ser}, or  {\it
equisingular}.

Let $\widetilde{C}$ be the normalization of $C$ and $f:\widetilde{C} \khpil S$ the natural morphism.
Then we have a short exact sequence
\begin{equation} \label{eq:2}
\xymatrix{
 0 \ar[r] & \T_{\widetilde{C}} \ar[r]^{df}  & f^*\T_S \ar[r] & \N_f \ar[r] & 0,
}
\end{equation}
defining the {\it normal sheaf} $\N_f$ to $f$ (cf.
\cite[\S~3.4.3]{ser}). Its sections parametrize first-order
deformations of the morphism $f$, that is, first-order deformations
of $C$ that are {\it equigeneric} (i.e. of constant geometric genus).

Let $T \subset \N_f$ be the torsion subsheaf and $\overline{\N}_f:= \N_f/T$, which is locally free on ${\widetilde{C}}$.

On $C$ we also have a natural exact sequence
\[
\xymatrix{
 0 \ar[r] & \O_C \ar[r] & f_*\O_{\widetilde{C}} \ar[r] & \tau \ar[r] & 0,
}
\]
where $\tau$ is a torsion sheaf supported on $\Sing C$. This yields an exact sequence
$$
\xymatrix{
 0 \ar[r] &  \N'_{C/S} \ar[r] & f_* \overline{\N}_f \ar[r] & \N'_{C/S} \* \tau \ar[r] & 0
}
$$
(cf. \cite[(3.53)]{ser}). It follows that

\begin{equation}\label{eq:s3}
   h^0(\N'_{C/S}) \leq h^0(\overline{\N}_f) \leq h^0(\N_f).
\end{equation}
Geometrically, this means that the first-order equisingular
deformations are a subset of the first-order equigeneric
deformations.

   Now we are in the position to prove a slightly more precise
   version of Theorem A.
\begin{thm} \label{prop:intbound}
   Let $\C_U=\{C_u \ni x_u \}_{u \in U}$, $U \subset \Hilb(S)$,
   be a two-dimensional irreducible flat family of pointed, reduced and irreducible curves
   on a smooth projective surface $S$ such that $\mult_{x_u} C_u \geq m$ for all $u \in U$ and such that the gonality of
   the normalization of the general curve is $\ell$.
   Then
   \begin{equation} \label{eq:intbound}
     C_u^2\geq m(m-1)+\ell.
\end{equation}
Moreover, if equality holds, then, for general $u \in U$, $C_u$ is
smooth outside $x_u$, and has an ordinary $m$-tuple point at $x_u$.
\end{thm}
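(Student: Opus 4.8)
The plan is to transfer the whole problem to the smooth normalization $\widetilde{C}$ and to read off $C^2$ from the degree of the line bundle $\overline{\N}_f$, bounding that degree from below by the gonality and bounding the genus drop from below by the multiplicity. Concretely, I aim to establish the chain
$$\ell \;\leq\; \deg\overline{\N}_f \;\leq\; C^2 - m(m-1),$$
from which \eqref{eq:intbound} is immediate. The left inequality will come from the $2$-dimensionality of $U$ together with \eqref{eq:s3}, and the right inequality from a Chern-class computation combined with the delta-invariant estimate at the $m$-fold point.

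First I would compute $\deg\overline{\N}_f$. Taking first Chern classes in the defining sequence \eqref{eq:2} and accounting for the torsion $T=\ker(\N_f\to\overline{\N}_f)$ gives $\deg\overline{\N}_f = -K_S\cdot C - (2-2p_g) - \length(T)$. Feeding in adjunction in the form $-K_S\cdot C = C^2 - 2p_a + 2$, this collapses to
$$\deg\overline{\N}_f = C^2 - 2(p_a-p_g) - \length(T).$$
Writing $\delta = p_a - p_g = \sum_{q\in\Sing C}\delta_q$ for the total delta-invariant and using that a point of multiplicity $\mu$ contributes $\delta_q \geq \binom{\mu}{2}$, the hypothesis $\mult_{x_u}C_u\geq m$ yields $\delta \geq \binom{m}{2} = m(m-1)/2$; since $\length(T)\geq 0$, this gives the right-hand inequality $\deg\overline{\N}_f \leq C^2 - m(m-1)$.

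The left inequality is the heart of the matter and the step I expect to be the main obstacle. The idea is that the $2$-dimensional family $U$ records two independent first-order deformations of $C$ preserving its singularity type (the $m$-fold point is merely transported as $u$ varies), so the tangent space to $U$ at the general point $[C]$ injects into the space $H^0(\N'_{C/S})$ of equisingular deformations; hence $h^0(\N'_{C/S})\geq 2$, and by \eqref{eq:s3} also $h^0(\overline{\N}_f)\geq 2$. The delicate points are to justify that the family is genuinely equisingular at a general member (so that its characteristic map lands in $\N'_{C/S}$ and is injective, using that distinct points of $U\subset\Hilb(S)$ give distinct curves), and then, since $\overline{\N}_f$ is a line bundle on the smooth curve $\widetilde{C}$, to extract from two independent sections a non-constant morphism $\widetilde{C}\to\PP^1$ of degree at most $\deg\overline{\N}_f$. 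By the definition of gonality this degree is at least $\ell$, giving $\ell\leq\deg\overline{\N}_f$ and closing the chain.

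Finally I would treat the equality case. Equality in \eqref{eq:intbound} forces equality throughout the chain, in particular $\length(T)=0$ and $\delta = m(m-1)/2$ with $\mult_{x_u}C_u = m$. The vanishing of the torsion shows that $f$ is an immersion, so every branch through $x_u$ is smooth; the equality $\delta=\binom{m}{2}$ together with $\mult_{x_u}=m$ leaves no delta-invariant available at any other point and forces the contribution at $x_u$ to be exactly $\binom{m}{2}$, which happens precisely when $x_u$ is an ordinary $m$-fold point and $C_u$ is smooth elsewhere. This is exactly the asserted description of the general member under equality.
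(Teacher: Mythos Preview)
Your proof is correct and follows essentially the same route as the paper's: both compute $\deg\overline{\N}_f$ via \eqref{eq:2} and adjunction to obtain $C^2 = 2(p_a-p_g) + \deg\overline{\N}_f + \length T$, bound $p_a-p_g\geq\binom{m}{2}$ from the $m$-fold point, and use the $2$-dimensionality of $U$ together with \eqref{eq:s3} to get $h^0(\overline{\N}_f)\geq 2$ and hence $\deg\overline{\N}_f\geq\ell$. Your treatment is in fact slightly more explicit than the paper's at two points---you spell out why two sections of $\overline{\N}_f$ force $\deg\overline{\N}_f\geq\gon(\widetilde{C})$, and you justify the ordinary $m$-tuple point in the equality case via $T=0$ (immersion) plus $\delta=\binom{m}{2}$---whereas the paper simply asserts both; the step you flag as delicate (that the tangent space to $U$ lands in $H^0(\N'_{C/S})$) is likewise asserted without comment in the paper.
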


\begin{proof}
   Let $C$ be a general member of the family and $x$ the special point on $C$. Let $\widetilde{C}$ be the normalization
   of $C$ and $f:\widetilde{C} \khpil S$ the natural morphism.
   From \eqref{eq:2} and the definition of $\overline{\N}_f$ above we have
\begin{equation} \label{eq:5}
   p_g(C)=1+\frac12(K_S\cdot C+\deg(\overline{\N_f})+ \length T).
\end{equation}
Since a point of multiplicity $m$ causes the geometric genus of an irreducible curve to drop at least by $m \choose 2$ with respect
to the arithmetic genus, we must have
\begin{equation} \label{eq:6}
p_a(C) \geq {m \choose 2}+p_g(C)=\frac{1}{2}m(m-1)+p_g(C),
\end{equation}
so that
\begin{equation} \label{eq:6a}
C^2 =  2(p_a(C)-1)-K_S.C \geq m(m-1)+2(p_g(C)-1)-K_S.C.
\end{equation}
By our assumptions, $h^0(\N'_{C/S}) \geq 2$, so that
$h^0(\overline{\N}_f) \geq 2$ by \eqref{eq:s3}. Hence
$\deg(\overline{\N_f}) \geq \ell$. Combining
 \eqref{eq:5} and \eqref{eq:6a} we thus obtain
   $$C^2=2(p_a(C)-p_g(C))+\deg(\overline{\N}_f)\geq m(m-1)+\deg(\overline{\N}_f) \geq m(m-1)+\ell.$$
 This proves \eqref{eq:intbound}. If equality holds in \eqref{eq:intbound}, then clearly we must have
$\length T=0$ in \eqref{eq:5} and equality in \eqref{eq:6} and the last statement follows.
\end{proof}

In particular, we immediately see that $ C_u^2\geq m(m-1)+2$ if $S$
is non-rational.

\begin{rem} \label{rem:intbound}
{\rm If $K_S.C_u <0$, then from \eqref{eq:6a} one also obtains that either
  \begin{itemize}
  \item[(i)] $C_u^2 \geq m(m-1) - K_S.C_u$; or
  \item[(ii)] $C_u^2 =m(m-1) - K_S.C_u-2$, $p_g(C_u)=0$ (whence $S$ is rational) and $C_u$ is smooth outside $x_u$.
  \end{itemize}
In certain cases, as for instance in Example \ref{exa:cubic} below,
this may give a better bound than \eqref{eq:intbound}.}
\end{rem}

\section{Positivity of the canonical divisor and gonality}

The next result gives lower bounds on $\ell$ depending on the geometry of $S$.

We recall the following definition made in \cite{knman}:

\begin{defn} \label{def:birk}
  {\rm Let $L$ be a line bundle on a smooth projective variety $X$. Then $L$ is} birationally $k$-very ample
{\rm if there is a Zariski-open dense subset $U \sub X$ such that,
for any $0$-dimensional scheme $Z$ of length $k+1$ with $\Supp Z
\subset U$, the natural restriction map
\[ H^0(L) \rightarrow H^0(L \* \O_Z) \]
is surjective. }
\end{defn}

 For instance, $L$ is birationally $0$-very ample if and only if it has a section
 and $1$-very ample if and only if the rational map $\varphi_L$ determined by $|L|$ is birational
 onto its image.

We note that the notion of birational $k$-very ampleness is the ``birational version'' of the ordinary notion of $k$-very ampleness, in the sense that if $X' \rightarrow X$ is a birational morphism between
smooth projective varieties and $L$ is a line bundle on $X$, then $L$ is birationally $k$-very ample if and only if $f^*L$ is.

\begin{lemma} \label{lemma:help}
 Let $U \subset \Hilb(S)$
   be a reduced and irreducible scheme parametrizing a flat family of reduced and irreducible curves
   on a smooth projective surface $S$ such that  the gonality of
   the normalization of the general curve is $\ell$.
\begin{itemize}
\item[(a)] If $\dim U \geq 1$ and $K_S$ is birationally $k$-very ample, then $\ell \geq k+2$.
\item[(b)] If $\dim U \geq 2$ and $S$ is birational to a surface admitting a surjective morphism onto a smooth curve $B$ of gonality
$b$, then $\ell \geq b$.
\end{itemize}
\end{lemma}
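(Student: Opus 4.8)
Both parts begin from a base-point-free pencil computing the gonality: write $\ell=\gon(\widetilde C)$ and fix a line bundle $A$ on $\widetilde C$ with $\deg A=\ell$ and $h^0(A)\geq 2$, whose members form a one-parameter family of effective divisors $\{D_t\}_{t\in\PP^1}$, reduced of degree $\ell$ for general $t$ and with supports sweeping out all of $\widetilde C$. I will also use throughout that a positive-dimensional family of distinct irreducible curves dominates $S$ (otherwise its two-dimensional total space would map into a fixed curve, forcing the members into a finite set), so that for a general member $C$ the morphism $f$ is an isomorphism onto its image over a dense open subset of $S$, and general points of $\widetilde C$ map to general points of $S$.

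For part (a) the plan is to transport the birational $k$-very ampleness of $K_S$ to $\widetilde C$ and then contradict a Riemann--Roch computation. First I would check that $f^*K_S$ is birationally $k$-very ample on $\widetilde C$: a length-$(k+1)$ subscheme $W$ supported in a suitable dense open $V\subset\widetilde C$ maps isomorphically to a length-$(k+1)$ subscheme $f(W)$ of $S$ supported in the open set of Definition \ref{def:birk}, and the surjection $H^0(K_S)\to H^0(K_S\*\O_{f(W)})$ factors through $H^0(f^*K_S)\to H^0(f^*K_S\*\O_W)$, forcing the latter to be onto. Now suppose, for contradiction, that $\ell\leq k+1$. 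Then a general pencil fibre $D=D_t$ has length $\ell\leq k+1$ and support in $V$, hence imposes independent conditions on $|f^*K_S|$. The key comparison is adjunction for the normalization: taking determinants in \eqref{eq:2} gives $\det\N_f=K_{\widetilde C}\*(f^*K_S)\i$, and since the moving family forces $h^0(\overline{\N}_f)\geq 1$ (see below), the line bundle $\det\N_f=\overline{\N}_f\*\O(\length T)$ is effective. Multiplying by a section of $\det\N_f$ that is nonzero along the general $D$ embeds $H^0(f^*K_S)$ into $H^0(K_{\widetilde C})$ compatibly with evaluation at the points of $D$; hence $D$ would impose independent conditions on $|K_{\widetilde C}|$ as well. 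But $h^0(A)\geq 2$ gives $h^0(K_{\widetilde C}\*\O(-D))\geq h^0(K_{\widetilde C})-\ell+1$ by Riemann--Roch, so the $\ell$ points of $D$ impose at most $\ell-1$ conditions on $|K_{\widetilde C}|$ --- a contradiction. Therefore $\ell\geq k+2$.

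For part (b) the plan is to push the pencil down to $B$. After replacing $S$ by the birational model carrying $\pi\colon S'\to B$ (a general member avoids the finite indeterminacy locus and maps birationally to its image, so the gonality $\ell$ of its normalization is unchanged and the family transports), I would first use $\dim U\geq 2$ to guarantee that the general member is not contracted by $\pi$: a family contained in the fibres of $\pi$ would map to $B$ with finite fibres and so have dimension at most $1$. Thus the induced morphism $g\colon\widetilde C\to B$ is surjective. I then consider the pushed-forward zero-cycles $g_*D_t$ on $B$: they are effective of degree $\ell$ and, since $D_t\sim D_{t'}$ on $\widetilde C$, all linearly equivalent to $g_*A$. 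This family is nonconstant in $t$, for otherwise the supports of all $D_t$ would lie in the fixed finite set $g\i(\Supp g_*D_0)$, contradicting base-point-freeness of $A$. Hence $|g_*A|$ has degree $\ell$ and dimension $\geq 1$ on $B$, and after removing its base locus yields a pencil of degree $\leq\ell$, so that $b=\gon(B)\leq\ell$, as desired.

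The main obstacle in (a) is the effectivity of $\det\N_f=K_{\widetilde C}\*(f^*K_S)\i$, equivalently the inclusion $f^*K_S\incl K_{\widetilde C}$: this is exactly where $\dim U\geq 1$ enters. The positive-dimensional family yields a nonzero first-order equigeneric deformation of $f$, hence a nonzero section of $\N_f$; the delicate point is that this section is \emph{nontorsion}, i.e. nonzero in $\overline{\N}_f=\N_f/T$, which holds because the family moves the general point of $C$ and not merely its singularities, giving $h^0(\overline{\N}_f)\geq 1$ (compatibly with the chain \eqref{eq:s3}). In (b) the delicate points are instead the transport of the family across the birational map and the verification that $g_*D_t$ genuinely varies; both are handled by generic-smoothness and base-point-freeness arguments, and notably neither of these uses any positivity of $K_S$.
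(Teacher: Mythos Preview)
Your argument is correct in both parts, but the route in (a) differs from the paper's. The paper passes to the total space: it takes a one-dimensional slice $U'\subset U$, compactifies and resolves the universal family to get a smooth fibered surface $f:T\to S$ with general fibre $F\cong\widetilde C$, and then uses adjunction $K_T|_F=K_F$ together with Riemann--Hurwitz $K_T=f^*K_S+R$ (with $R\geq 0$) to conclude that the gonality pencil on $F$ already fails $(\ell-1)$-very ampleness for $f^*K_S$, hence for $K_S$ birationally. You instead stay on a single general curve and extract the same inequality from the effectivity of $\det\N_f=K_{\widetilde C}\*(f^*K_S)\i$, which you deduce from $h^0(\overline{\N}_f)\geq 1$ via \eqref{eq:s3}. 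The two are really the same fact seen from different angles---restricting the paper's ramification divisor $R$ to a fibre is exactly your effective section of $\det\N_f$---but the total-space packaging avoids the deformation-theoretic check that the section of $\N_f$ is nontorsion, while your approach avoids constructing and smoothing the family.

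For (b) your proof is essentially the same as the paper's, except that the paper simply asserts that a surjection $\widetilde C\to B$ forces $\gon(\widetilde C)\geq\gon(B)$, whereas you supply the norm/pushforward argument showing $|g_*A|$ moves on $B$. Your extra detail is welcome; the one sentence to tighten is the non-constancy of $t\mapsto g_*D_t$: from $\Supp D_t\subset g\i(\Supp g_*D_0)$ for all $t$ you should add that a $\PP^1$ of effective divisors supported in a fixed finite set is necessarily constant, contradicting $\dim|A|\geq 1$.
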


\begin{proof}
To prove (a), pick a $1$-dimensional reduced and irreducible
subscheme $U'$ of $U$. After compactifying and resolving the
singularities of the universal family over $U'$, we obtain a smooth
surface $T$, fibered over a smooth curve, with general fiber $F$ a
smooth  curve of gonality $\ell$, and a surjective morphism $f:T \to
S$. By adjunction $K_T$ fails to be $(\ell-1)$-very ample on the
general fiber $F$. More precisely, on the general $F$, there is a
one-dimensional family of schemes $\{Z\}$ of length $\ell$ such that
the evaluation map
\[ H^0(K_T) \rightarrow H^0(K_T \* \O_Z) \]
is not surjective. Since $K_T = f^*K_S + R$, where $R$ is the
(effective) discriminant divisor of $f$, and $f$ is generically
$1:1$ on the fibers, we see that $K_S$ fails to be birationally
$(\ell-1)$-very ample. Hence, $k \leq \ell-2$.

As for (b), there is by assumption a birational morphism from a
smooth surface $\widetilde{S}$ to $S$ such that there is a
surjective morphism $\widetilde{S} \rightarrow B$.  Of course, also
$\widetilde{S}$ is dominated by a two-dimensional nontrivial family
of reduced and irreducible curves having normalizations of gonality
$\ell$. A general such curve must dominate $B$, whence so does its
normalization, so that $\ell \geq b$.
\end{proof}

\section{Examples}

We here give a few examples involving Theorem \ref{prop:intbound} and Lemma \ref{lemma:help}.

\begin{example} \label{exa:cubic}
(Cubic surfaces and $\PP^2$) {\rm We consider a smooth cubic surface
$S$ in $\PP^3$ and families of hyperplane sections satisfying the
conditions in Theorem \ref{prop:intbound}. Then $C_u.K_S=-3$ and
$p_a(C_u)=1$. Now Remark \ref{rem:intbound} yields that either
$C_u^2 \geq m(m-1) +3$ or $C_u^2 =m(m-1) +1$, $C_u$ is rational and
smooth outside $x_u$ and $x_u$ is a node.

The first case happens with $m=1$ for the family of all hyperplane sections and the second indeed happens for the $2$-dimensional family of tangent sections.

Similarly, for $S=\PP^2$, any $2$-dimensional family of lines
satisfies $C_u^2 =m(m-1) +1$ for $m=1$. }
\end{example}

\begin{example} \label{exa:abe}
  (Abelian surfaces)
  {\rm Let $S$ be a smooth abelian surface and $L$ a globally generated line bundle on $S$. For any family as in the hypotheses of Theorem \ref{prop:intbound}, we have that $C_u^2 \geq m(m-1)+2$. The following is an example where equality is attained.

  Let $S$ be an abelian surface with
  irreducible principal polarization $\Theta$. We assume that $\Theta$ is
  symmetric. Let $\mu$ denote the endomorphism
  $$\mu:\;S\ni x\longrightarrow 2\cdot x\in S$$
  and let $C=\mu^*(\Theta)$. Then it is well known (cf. \cite[Proposition II.3.6]{LB}),
  that $C\in|4\Theta|$ and $C$ has multiplicity $m=6$ at
  the origin. Of course $p_g(C)=2$, so that $\gon(C)=2$. We have
  $$32=C^2=6\cdot 5+2.$$
  Translates of $C$ give a two-dimensional family of curves which
  are actually algebraically equivalent to $C$ but not linearly
  equivalent.}
\end{example}

\begin{example} \label{exa:k3}
($K3$ surfaces) {\rm Let $S$ be a smooth $K3$ surface and $L$ a
globally generated line bundle on $S$. It is well-known that for any
positive integer $n$, there exist such $S$ and $L$ with $L^2=2n$.
Let $\{C_u\}_{u \in U}$ be any family of curves in $|L|$ satisfying
the conditions in Theorem \ref{prop:intbound}. Then  the theorem
yields that
\begin{equation} \label{eq:k3}
  L^2 \geq m(m-1)+2.
\end{equation}

If $m=1$, this yields that $L^2 \geq 2$, which is optimal, as $\dim |L|=\frac{1}{2}L^2+1=2$ if $L^2=2$.

If $m=2$, \eqref{eq:k3} yields that $L^2 \geq 4$, which is also
optimal. Indeed, if $L^2=4$, then $\dim |L|=3$ and for any $x \in
S$, we have $\dim |L \* \mathfrak{m}_x^2| \geq 3-3=0$, so that there
is a $2$-dimensional family of curves with a point of multiplicity
two. A general such curve is nodal of genus two, hence also of
gonality $2$.

If $m=3$, \eqref{eq:k3} yields that $L^2 \geq 8$, and we do not know
if this is optimal.
If $L^2=10$, however, then $\dim |L|=6$
and for any $x \in S$, we have $\dim |L \* \mathfrak{m}_x^3| \geq
6-6=0$, so that there is a $2$-dimensional family of curves with a
point of multiplicity three. }
\end{example}

\begin{example} \label{exa:gen}
   (Surfaces of general type) \rm Let $S$ be a smooth surface of degree
   $d \geq 5$ in $\PP^3$ and consider the family $C_x$ of
   tangent hyperplane sections of $S$, parametrized by $x \in S$.
   Then a general member of this family is irreducible and has
   multiplicity $m=2$ at $x$. On the other hand for the canonical
   divisor $K_S$ we have
   $$K_S=\calo_S(d-4),$$
   so that it is  $(d-4)$-very ample. This shows that equality can hold in \eqref{eq:intbound} in
   Theorem \ref{prop:intbound} even if $K_S$ is $k$-very
   ample and not merely birationally $k$-very ample, cf. Lemma \ref{lemma:help}.
\end{example}

\section{Applications to Seshadri constants}\label{sec:application}

   In this section we will prove Theorem B, as well as its
   consequences.

   We recall here only very basic notions connected to Seshadri
   constants. For a systematic introduction to this circle of ideas
   we refer to chapter 5 in Lazarsfeld's book \cite{PAG}.
\begin{defn}\rm
   Let $X$ be a smooth projective variety and $L$ a nef line bundle
   on $X$. For a fixed point $x\in X$ the real number
   $$\eps(L;x):=\inf\frac{L\cdot C}{\mult_x C}$$
   is the \emph{Seshadri constant of $L$ at $x$} (the
   infimum being taken over all irreducible curves $C$ passing through
   $x$).
\end{defn}
   For arbitrary line bundles Nakamaye introduced in \cite{Nak02} the
   notion of \emph{moving Seshadri constants} $\eps_{\mov}(L;1)$, see also \cite{RV}. If
   $L$ is nef, then the both notions coincide. If $L$ is big, then
   the function $\eps_{\mov}(L,-)$ on $S$ assumes its maximal value for
   $x$ very general, that is, away from a countable union of proper
   Zariski closed subsets. This was proved by Oguiso \cite{og} for $L$
   ample but the proof goes through for $L$ big. This maximal value
   will be denoted by $\eps(L;1)$. We use it in Corollary \ref{cor}.

   If there exists a curve $C\subset X$ actually computing the
   infimum in the above definition, then we call such a curve a
   \emph{Seshadri curve} (for $L$, through $x$).
   It is not known if Seshadri curves exist
   in general.

   If $X=S$ is a surface, then there is an upper bound
   on Seshadri constants
   $$   \eps(L;x)\leq\sqrt{L^2}\;.
   $$
   If $\eps(L;x)$ is \emph{strictly} less that $\sqrt{L^2}$, then
   there exists a Seshadri curve through $x$. This observation is
   fundamental for both results presented in this section.

   Before proving Theorem B, we need to introduce the following
   notation. This takes care of bounding the gonality of curves in
   the family appearing in the proof of Theorem B.
   We define
\begin{eqnarray}
  \label{eq:defmu}
 \mu_S & := & \max \Big\{ d \; | \; K_S \; \mbox{is birationally $(d-2)$-very ample or} \\
\nonumber       &    & S \; \mbox{is birational to a surface
dominating a smooth curve of gonality $d$} \Big  \}.
\end{eqnarray}

In particular, note that $\mu_S \geq 2$ if $S$ is non-rational and
$\mu_S \geq 3$ if $|K_S|$ defines a birational map.

\proofof{Theorem B}
   First of all \eqref{inequality} implies that
   for every point $P\in S$ there exists a Seshadri curve $C_P$.
   Such a curve need not be unique but there are only finitely
   many of them for every point $P$, see
   \cite[Prop. 1.8]{hab}. Since there are only countably many
   components in the Hilbert scheme of curves in $S$, there must
   exist a component containing at least a $1$-dimensional family $U$ of curves
   $C_P$. If there is more than one such component, we take one for
   which the dimension of $U$ is maximal.
   Note, that for curves in $U$ we must have $C_P^2\geq 0$.
   Let $m$ denote the multiplicity of a general member
   of this family in its distinguished point.

   It might happen that $\dim U=1$, but this means
   that a Seshadri curve $C_P$ equals to $C_Q$ for $Q\in C_P$ very
   general. Of course in this case it must be that $m=1$, because a Seshadri
   curve is reduced and irreducible. Then the index
   theorem together with \eqref{inequality} gives
   $$
      C_P^2L^2\leq (L.C_P)^2 < (1-\frac{1}{4\mu_S})L^2 < L^2,
   $$
   which implies $C_P^2=0$. It is well known that a moving curve of self-intersection $0$
   is semi-ample and we are in case b) of the Theorem.

   So we may assume that $\dim U=2$ and $m\geq 2$. Hence
   $C_P^2 \geq m(m-1)+\mu_S$ by Theorem \ref{prop:intbound} and Lemma \ref{lemma:help}.
   Revoking again the index theorem we therefore get
   \begin{equation} \label{eq:m2}
      (m(m-1)+\mu_S) L^2 \leq C_P^2L^2 \leq (C_P.L)^2 =m^2\varepsilon(L;1)^2 < m^2(1-\frac{1}{4\mu_S}) L^2 .
   \end{equation}
   It is elementary to observe that the real valued function
   $$
      f(m)=\frac{m(m-1)+ \mu_S}{m^2}
   $$
   for $m\geq 2$ has a minimum at $m=2\mu_S$ and
   $f(2\mu_S)=1-\frac{1}{4\mu_S}$, so that we arrive
   to a contradiction with \eqref{eq:m2}. This concludes the proof of
   Theorem B.
\endproof

   As a consequence, we obtain the following result yielding lower
   bounds on the Seshadri constant of the canonical bundle at very
   general points, which improves and generalizes
   \cite[Thms. 2 and 3]{bs}.

\begin{cor}\label{cor}
   Let $S$ be a minimal smooth projective surface of general type i.e. such that $K_S$ is big and nef. Then either
   $\varepsilon(K_S;1) \geq \sqrt{\frac{7}{8} K_S^2}$ or $S$ is fibered by Seshadri curves of $K_S$-degree
   $\varepsilon(K_S;1) \geq 2$.

   If furthermore $K_S$ is birationally $k$-very ample, for an integer $k \geq 1$, then either
   $\varepsilon(K_S;1) \geq \sqrt{(1-\frac{1}{4(k+2)}) K_S^2}$ or $S$ is fibered by Seshadri curves of $K_S$-degree
   $\varepsilon(K_S;1) \geq 4k$.
\end{cor}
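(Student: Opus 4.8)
The plan is to apply Theorem B with $L = K_S$, which is big and nef by hypothesis, and then to translate its conclusion into the stated bounds. First I would compute the relevant value of $\mu_S$ in each case. For the first statement, since $S$ is a minimal surface of general type, $K_S$ certainly has sections and $K_S$ is at least birationally $0$-very ample, but more to the point $S$ is nonrational, so $\mu_S \geq 2$ by the remark following the definition of $\mu_S$. Plugging $\mu_S = 2$ into the threshold $\sqrt{(1-\frac{1}{4\mu_S})}\cdot\sqrt{L^2}$ from \eqref{inequality} gives exactly $\sqrt{(1-\frac{1}{8})K_S^2} = \sqrt{\frac{7}{8}K_S^2}$. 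The logic is then a dichotomy: if $\varepsilon(K_S;1) \geq \sqrt{\frac{7}{8}K_S^2}$ we are in the first alternative and done; otherwise \eqref{inequality} holds (at the very general point, hence for all $x$ after taking the maximal value), and Theorem B forces $S$ to be fibered by Seshadri curves.

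The remaining content in the first statement is the assertion that the fibering Seshadri curves have $K_S$-degree $\varepsilon(K_S;1) \geq 2$. Here I would argue that the Seshadri curves $C$ move in the fibration, so $C^2 \geq 0$, and since they compute the Seshadri constant we have $\varepsilon(K_S;1) = K_S \cdot C / \mult_x C$. Because $S$ is of general type and $K_S$ is big and nef, $K_S \cdot C > 0$ for any curve $C$; the claim $\varepsilon(K_S;1) \geq 2$ should follow from the structure of the fibration together with positivity of $K_S$, essentially because on a general-type surface a moving curve cannot have $K_S$-degree too small relative to its multiplicity. I would verify this by noting that in the fibered case one typically gets $m = 1$ and the fibers $C$ satisfy $K_S \cdot C \geq 2$ by adjunction (a moving curve of nonnegative self-intersection on a general-type surface has $p_a \geq 1$, forcing $K_S \cdot C \geq 2$).

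For the second statement I would repeat the argument with the sharper input $\mu_S \geq k+2$, which holds precisely because $K_S$ is assumed birationally $k$-very ample: this is the first clause in the definition \eqref{eq:defmu} of $\mu_S$, taking $d = k+2$. Substituting $\mu_S = k+2$ into the threshold yields the bound $\sqrt{(1-\frac{1}{4(k+2)})K_S^2}$ in the first alternative. In the second alternative, Theorem B again gives a fibration by Seshadri curves, and the degree bound $\varepsilon(K_S;1) \geq 4k$ should come from tracing through the extremal analysis in the proof of Theorem B: the function $f(m) = (m(m-1)+\mu_S)/m^2$ attains its minimum at $m = 2\mu_S$, so when one is in the fibered regime the relevant multiplicity is governed by this critical value, and combining $C^2 \geq m(m-1) + \mu_S$ from Theorem \ref{prop:intbound} with the index theorem pins down $\varepsilon(K_S;1)$ from below in terms of $k$.

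The main obstacle I anticipate is not the threshold computations, which are direct substitutions of $\mu_S = 2$ and $\mu_S = k+2$ into Theorem B, but rather justifying the precise numerical degree bounds ($\geq 2$ and $\geq 4k$) on the Seshadri curves in the fibered alternative. These do not follow formally from the statement of Theorem B alone, since Theorem B only concludes the existence of a fibration; extracting a lower bound on $\varepsilon(K_S;1)$ in that case requires re-entering the proof of Theorem B and examining the extremal multiplicity $m = 2\mu_S$ and the associated inequality \eqref{eq:m2}. I would therefore structure the proof so that the degree bounds are read off from this extremal analysis, being careful that the moving curves in the fibration genuinely satisfy $C^2 \geq 0$ and that the adjunction-type positivity of $K_S$ on a general-type surface is invoked correctly.
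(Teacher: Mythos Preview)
Your threshold computations are right, and your route to the bound $K_S\cdot C\geq 2$ in the fibered case is essentially the paper's (adjunction on a fiber with $C^2=0$, together with $K_S$ big and nef forcing $K_S\cdot C>0$, hence $\geq 2$ by parity). One small slip: $p_a(C)\geq 1$ only gives $K_S\cdot C\geq 0$; you need to rule out $K_S\cdot C=0$, which follows from the Hodge index theorem since $K_S^2>0$ and $C\not\equiv 0$.

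The real gap is your derivation of $\varepsilon(K_S;1)\geq 4k$ in the fibered alternative. Re-entering the proof of Theorem~B at the extremal value $m=2\mu_S$ cannot work: that analysis lives in the case $\dim U=2$, whereas the fibration arises precisely from the case $\dim U=1$, where the proof of Theorem~B shows $m=1$ and $C^2=0$. In particular Theorem~\ref{prop:intbound} (which needs a two-dimensional family) does not apply to the fiber curves, and indeed $C^2\geq m(m-1)+\mu_S$ would read $0\geq \mu_S\geq 2$. So the inequality \eqref{eq:m2} gives you nothing about the fibers.

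The paper's argument for $\geq 4k$ is entirely different and stays in the one-dimensional fibered situation. One applies Lemma~\ref{lemma:help}(a) directly to the one-parameter family of smooth fibers $C$: since $K_S$ is birationally $k$-very ample, the general fiber has $\gon(C)\geq k+2$. Then adjunction with $C^2=0$ gives $K_S\cdot C=2p_a(C)-2$, and the Brill--Noether bound $\gon(C)\leq\lfloor\frac{p_a(C)+3}{2}\rfloor$ yields $p_a(C)\geq 2\gon(C)-3\geq 2k+1$, hence
\[
K_S\cdot C \;=\; 2p_a(C)-2 \;\geq\; 2(2k+1)-2 \;=\; 4k.
\]
This is the missing idea you need to replace your extremal-$m$ argument.
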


   Note that the hypothesis that $S$ is minimal i.e.
   $K_S$ is nef is not restrictive, since if $\widetilde{S}
   \rightarrow S$ is a birational morphism, then one easily sees that
   $\varepsilon_{\mov}(K_{\widetilde{S}};1) \geq \varepsilon(K_S;1)$.

\proof
   If $K_S$ is big and nef, but not birationally $1$-very ample,
set $k=0$. Then, under the assumptions of the theorem, we have
$\mu_S \geq k+2$. By Theorem B, if $\eps(K_S;1) <
\sqrt{(1-\frac{1}{4(k+2)})K_S^2}$, then $S$ is fibered by (smooth)
Seshadri curves $C$ of $K_S$-degree $C.K_S = \eps(K_S;1)$. If $K_S$
is big and nef, we must have $C.K_S \geq 2$ by adjunction, as
$C^2=0$, and the first assertion follows.

If $K_S$ is birationally $1$-very ample for $k \geq 1$, then the general curve $C$ must have gonality
$\gon(C) \geq k+2$ by Lemma \ref{lemma:help}(a). Moreover, by adjunction we have $K_S.C=2p_a(C)-2$ and by Brill-Noether theory, $\gon(C) \leq \frac{p_a(C)+3}{2}$. Hence
\[ K_S.C = 2p_a(C)-2 \geq 2(2\gon(C)-3)-2 \geq 2(2(k+2)-3)-2 =4k, \]
and the second assertion follows.
\endproof

   Our next and last result parallels \cite[Theorem]{el} and \cite[Theorem 1]{xu}.
   The arguments are similar to those in the proof of Theorem B, but it seems that
   the result itself is of independt interest and is not a straightforward
   corollary of Theorem B.
   Note that for $\mu_S=1$, we retrieve (in practice) Xu's result.

\begin{prop} \label{thm:xuimpro}
  Let $S$ be a smooth surface and $L$ a big and nef line bundle on $S$. Assume that, for a given integer $a \geq 1$, we have that either
\[ L^2 > \frac{4\mu_Sa^2-4a+4}{4\mu_S-1}, \]
or
\[ L^2 = \frac{4\mu_Sa^2-4a+4}{4\mu_S-1} \; \; \mbox{and} \; \; \frac{L^2-2a}{2(L^2-a^2)} \not \in \ZZ. \]

Then, either
\begin{itemize}
\item[(i)] $\varepsilon(L;1) \geq a$, or
\item[(ii)] $S$ is fibered in Seshadri curves (of $L$-degree $< a$).
\end{itemize}
\end{prop}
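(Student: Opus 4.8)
The plan is to mimic the proof of Theorem B, assuming that (i) fails — that is, $\varepsilon(L;1) < a$ — and deducing (ii). Write $B = \frac{4\mu_S a^2 - 4a + 4}{4\mu_S - 1}$ for brevity. First I would record the elementary identity $B - a^2 = \frac{(a-2)^2}{4\mu_S-1} \geq 0$, so that under either hypothesis $L^2 \geq B \geq a^2$; hence $\varepsilon(L;1) < a \leq \sqrt{L^2}$, and Seshadri curves through a very general point exist. Exactly as in Theorem B, these sweep out a family $U$ of maximal dimension, with $C_P^2 \geq 0$; writing $m$ for the multiplicity of a general member at its distinguished point, one has $\dim U \in \{1,2\}$.

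If $\dim U = 1$, then $m = 1$ by the very same argument as in Theorem B, and the Hodge index theorem gives $C_P^2 L^2 \leq (L \cdot C_P)^2 = \varepsilon(L;1)^2 < a^2 \leq L^2$, forcing $C_P^2 = 0$. Such a moving curve is semi-ample, so $S$ is fibered by these Seshadri curves, which have $L$-degree $\varepsilon(L;1) < a$; this is case (ii). So the whole content is to show that the remaining possibility $\dim U = 2$, $m \geq 2$, cannot occur.

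In that case Theorem \ref{prop:intbound} together with Lemma \ref{lemma:help} gives $C_P^2 \geq m(m-1) + \mu_S$, while $C_P \cdot L = m\,\varepsilon(L;1) < ma$ is an integer, hence $C_P \cdot L \leq ma - 1$. The Hodge index theorem then yields
$$(m(m-1) + \mu_S)\, L^2 \leq C_P^2\, L^2 \leq (C_P \cdot L)^2 \leq (ma - 1)^2.$$
The main computation I would carry out is the algebraic identity, valid for all $m$,
$$(4\mu_S - 1)\Big[(ma-1)^2 - B\big(m(m-1)+\mu_S\big)\Big] = -\big((a-2)m - (2\mu_S a - 1)\big)^2 \leq 0,$$
which shows $(ma-1)^2 \leq B\big(m(m-1) + \mu_S\big)$ and therefore $L^2 \leq B$. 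Under the strict hypothesis $L^2 > B$ this is a contradiction, so $\dim U = 2$ is impossible and we land in case (ii).

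Finally, under the equality hypothesis $L^2 = B$, the displayed chain only gives $L^2 \leq L^2$, so equality must hold throughout; in particular the right-hand side of the identity must vanish, forcing $m = \frac{2\mu_S a - 1}{a-2}$. The last point to verify — and the step I expect to require the most care — is that this forced value of $m$ coincides with $\frac{L^2 - 2a}{2(L^2 - a^2)}$ once $L^2 = B$ is substituted; this is a direct computation using $L^2 - a^2 = \frac{(a-2)^2}{4\mu_S - 1}$ and $L^2 - 2a = \frac{2(a-2)(2\mu_S a - 1)}{4\mu_S - 1}$ (the degenerate case $a = 2$ forces $L^2 = a^2 = 4$, where the chain directly yields $4\mu_S \leq 1$ and is vacuous). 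Since $m$ is an integer $\geq 2$, the assumption $\frac{L^2 - 2a}{2(L^2 - a^2)} \not\in \ZZ$ rules out $\dim U = 2$ in the equality case as well, completing the argument.
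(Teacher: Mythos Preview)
Your argument is correct and tracks the paper's proof closely; the only differences are in packaging. For the setup you follow Theorem~B verbatim (Seshadri curves through every point, splitting on $\dim U$), whereas the paper instead works with the incidence locus $V=\{(C,x):\mult_x C>C\cdot L/a\}$ in the style of Ein--Lazarsfeld and Xu; both routes arrive at the same Hodge-index inequality $(m(m-1)+\mu_S)L^2\leq (ma-1)^2$. For the algebra, the paper rewrites this as the quadratic $f(m)=(L^2-a^2)m^2+(2a-L^2)m+(L^2\mu_S-1)\leq 0$ and computes its vertex and minimum value directly, while you instead prove the identity
\[
(4\mu_S-1)\bigl[(ma-1)^2-B(m(m-1)+\mu_S)\bigr]=-\bigl((a-2)m-(2\mu_S a-1)\bigr)^2,
\]
which is exactly the statement that $f(m)$ at $L^2=B$ is (minus) a perfect square, and then deduce $L^2\leq B$. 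Your packaging is arguably cleaner since the perfect-square factorization makes both the bound and the equality case fall out at once; the paper's version has the small advantage of keeping $L^2$ as a free parameter throughout. Your verification that the forced value $m=\frac{2\mu_S a-1}{a-2}$ agrees with $\frac{L^2-2a}{2(L^2-a^2)}$ at $L^2=B$, and your separate handling of the degenerate case $a=2$ (where $L^2=a^2=4$ and the chain gives $4\mu_S\leq 1$), match the paper's treatment in Remark~\ref{rem:xuimpro}.
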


\begin{rem} \label{rem:xuimpro}
  {\rm The case $\frac{L^2-2a}{2(L^2-a^2)} \not \in \ZZ$ also includes the case $L^2=a^2$. However, if $L^2 \geq  \frac{4\mu_Sa^2-4a+4}{4\mu_S-1}$, it is easily seen that $L^2 \geq a^2$, with equality only if $L^2=4$ and $a=2$. The proposition holds in this special case.}
\end{rem}

\begin{proof}
  As in the proof of \cite[Thm.]{el} or \cite[Theorem 1]{xu}, the set
\[ V:= \Big\{ (C,x) \; | \; x \in C \subset S \; \mbox{ a reduced, irreducible curve,} \; \mult_x(C) > \frac{C.L}{a} \Big\} \]
consists of at most countably many families. (This conclusion also holds when $L$ is big and nef but not ample.)

Assume that we are not in case (i) of the theorem, that is, assume that $\varepsilon(L;1) < a$. Then $\dim V \geq 2$.

By our assumptions we have $L^2 \geq a^2$, cf. Remark
\ref{rem:xuimpro}. Therefore, the index theorem yields, for any
curve $C \subset S$, that
\[ C.L \geq \sqrt{L^2 \cdot C^2} \geq a \sqrt{C^2}. \]
It follows that if $C.L <a$ for a curve $C \subset S$ moving in a nontrivial algebraic family, then $C^2=0$, so that we are in case (ii) of the theorem. Otherwise we must have a nonempty subset of
dimension $\geq 2$ of $V$ consisting of pairs $(C,x)$ with $\mult_x(C) > 1$. Since each curve in question is reduced, we can find a two-dimensional irreducible scheme $U \subset \Hilb S$ parametrizing curves in $V$ each with $\mult_{x} C >1$. Letting $m$ be the multiplicity of the general curve in this family and $C$ the algebraic equivalence class, we have $C^2 \geq m(m-1)+\mu_S$ by Proposition \ref{prop:intbound} and Lemma \ref{lemma:help}.

As $m > \frac{C.L}{a}$, we have $C.L \leq ma-1$, whence by the index
theorem,
\[ (ma-1)^2 \geq (C.L)^2 \geq C^2L^2 \geq (m^2-m+\mu_S)L^2, \]
so that
\begin{equation}
  \label{eq:meno}
  (L^2-a^2)m^2+(2a-L^2)m+(L^2\mu_S-1) \leq 0.
\end{equation}
If $L^2 \leq a^2$, then $L^2=a^2=4$ by Remark \ref{rem:xuimpro}, so that  \eqref{eq:meno} yields
$4\mu_S \leq 1$, a contradiction. Hence $L^2 > a^2$, so that the function
\[ f(x):= (L^2-a^2)x^2+(2a-L^2)x+(L^2\mu_S-1) \]
attains its minimal value $f_{min}$ at $x= \frac{L^2-2a}{2(L^2-a^2)}$ and
\[
  f_{min} = \frac{L^2}{4(L^2-a^2)} \Big[ (4\mu_S-1)L^2-4(\mu_Sa^2-a+1) \Big].
\]
By our assumptions on $L^2$ it follows that $f(x) \geq 0$ for all $x$ and that
$f(x) =0$ only if $L^2 = \frac{4\mu_Sa^2-4a+4}{4\mu_S-1}$ and then at the point
$x=\frac{L^2-2a}{2(L^2-a^2)}$, which is not an integer by assumption. Therefore
$f(m) >0$ for all $m \in \ZZ$, contradicting \eqref{eq:meno}.
\end{proof}

\paragraph*{{\it Acknowledgements.}}
This work has been supported by the SFB/TR 45 ``Periods, moduli
spaces and arithmetic of algebraic varieties''.
   The second and the third author were partially supported by
   a MNiSW grant N~N201 388834. The authors thank E.~Sernesi and F.~Flamini for helpful correspondence.

%
%

\end{document}